\newtheorem{theorem}{Theorem}[section]
\newtheorem{lemma}[theorem]{Lemma}
\newtheorem{proposition}[theorem]{Proposition}
\newtheorem{corollary}[theorem]{Corollary}
\newtheorem{remark}[theorem]{Remark}
\theoremstyle{definition}
\newtheorem{definition}[theorem]{Definition}
\begin{document}
\baselineskip=15pt

\title[Manifolds with trivial tangent bundle]{Principal
bundles on compact complex manifolds with trivial tangent bundle}

\author[I. Biswas]{Indranil Biswas}

\address{School of Mathematics, Tata Institute of Fundamental
Research, Homi Bhabha Road, Bombay 400005, India}

\email{indranil@math.tifr.res.in}

\subjclass[2000]{32L05, 53C30, 53C55}

\keywords{Homogeneous bundle, invariant bundle, holomorphic
connection}

\date{}

\begin{abstract}
Let $G$ be a connected complex Lie group and $\Gamma\, \subset\, G$
a cocompact lattice. Let $H$ be a complex Lie group. We
prove that a holomorphic principal $H$--bundle $E_H$ over $G/\Gamma$
admits a holomorphic connection if and only if $E_H$ is invariant.
If $G$ is simply connected, we show that a holomorphic
principal $H$--bundle $E_H$ over $G/\Gamma$ admits a flat holomorphic
connection if and only if $E_H$ is homogeneous.
\end{abstract}

\maketitle

\section{Introduction}\label{sec1}

Let $T\,=\, {\mathbb C}^n/\Gamma$ be a complex torus, so $\Gamma$
is a lattice of ${\mathbb C}^n$ of maximal rank. For any
$x\, \in\, T$, let $\tau_x\, :\, T\, \longrightarrow\, T$
be the holomorphic automorphism defined by $z\, \longmapsto\, z+x$.
Let $H$ be a connected linear algebraic group defined over
$\mathbb C$. A holomorphic principal $H$--bundle $E_H$ over $T$
admits a holomorphic connection if and only if 
$\tau^*_x E_H$ is holomorphically isomorphic to $E_H$ for every
$x\,\in\, T$; also, if $E_H$ admits a holomorphic connection, then
it admits a flat holomorphic connection
\cite[p. 41, Theorem 4.1]{BG}.

If $\Gamma$ is a cocompact lattice in a
connected complex Lie group $G$, then $G/\Gamma$ is clearly
a compact connected complex manifold with trivial tangent bundle.
Let $M$ be a connected compact complex manifold such that the 
holomorphic tangent bundle $TM$ is holomorphically trivial. Then there 
is a connected complex Lie group $G$ and a cocompact
lattice $\Gamma\, \subset\, G$ such that $G/\Gamma$ is biholomorphic
to $M$. The manifold $M$ is K\"ahler if and only if $M$
is a torus. Our aim here is to investigate principal bundles on $M$
admitting a (flat) holomorphic connection.

Let $G$ be a connected complex Lie group and $\Gamma\, \subset\, G$
a cocompact lattice. For any $g\, \in\, G$, let
$$
\beta_g\, :\, M\, :=\, G/\Gamma \, \longrightarrow\, G/\Gamma
$$
be the automorphism defined by $x\, \longmapsto\, gx$.
Let $H$ be a connected complex Lie group.

A holomorphic principal $H$--bundle $E_H$ over $M$
is called \textit{invariant} if for each $g\, \in\, G$, the pulled
back bundle $\beta^*_g E_H$ is
isomorphic to $E_H$.
A \textit{homogeneous} holomorphic 
principal $H$--bundle on $M$ is a pair $(E_H\, , 
\rho)$, where $f\, :\, E_H\, \longrightarrow\, M$ is a holomorphic 
principal $H$--bundle, and
$$
\rho\, :\, G\times E_H\, \longrightarrow\, E_H
$$
is a holomorphic left--action on the total space of $E_H$,
such that the following two conditions hold:
\begin{enumerate}
\item $(f\circ\rho) (g\, ,z)\, =\, \beta_g(f(z))$
for all $(g\, ,z)\, \in\, G\times E_H$, and

\item the actions of $G$ and $H$ on $E_G$ commute.
\end{enumerate}
If $(E_H\, , \rho)$ is homogeneous, then $E_H$ is invariant.

We prove the following theorem (see Theorem \ref{thm1} and
Proposition \ref{prop1}):

\begin{theorem}\label{t-i}
A holomorphic principal $H$--bundle $E_H$ over $M$ admits a
holomorphic connection if and only if $E_H$ is invariant.

Assume that the group $G$ is simply connected. A holomorphic
principal $H$--bundle $E_H$ over $M$ admits a flat holomorphic
connection if and only if $E_H$ is homogeneous.
\end{theorem}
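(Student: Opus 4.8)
The plan is to prove that, for a holomorphic principal $H$--bundle $E_H$ over $M\,=\,G/\Gamma$, the following three conditions are equivalent: (a) $E_H$ admits a flat holomorphic connection; (b) $E_H$ is homogeneous; (c) $E_H\,\cong\,E_\rho$ for some homomorphism $\rho\,:\,\Gamma\,\longrightarrow\,H$, where $E_\rho\,:=\,(G\times H)/\Gamma$ with $\gamma\,\in\,\Gamma$ acting by $(g,h)\,\longmapsto\,(g\gamma^{-1},\,\rho(\gamma)h)$ and $H$ acting by right translation on the second factor. The only preliminary observation needed is that, since the cocompact lattice $\Gamma$ is discrete, the projection $q\,:\,G\,\longrightarrow\,M$ is a holomorphic covering map whose deck transformation group is $\Gamma$ acting by right translations; as $G$ is simply connected, $q$ is the universal covering and one gets a canonical identification $\pi_1(M)\,=\,\Gamma$. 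Each $E_\rho$ is then a holomorphic principal $H$--bundle on $M$.

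For (b)$\,\Rightarrow\,$(c): given a homogeneous structure $\rho_0\,:\,G\times E_H\,\longrightarrow\,E_H$, fix a point $p_0$ in the fibre of $E_H$ over $e\Gamma\,\in\,M$, whose stabiliser in $G$ is precisely $\Gamma$. For $\gamma\,\in\,\Gamma$ the point $\rho_0(\gamma,p_0)$ lies in the same fibre, so $\rho_0(\gamma,p_0)\,=\,p_0\cdot\rho(\gamma)$ for a unique $\rho(\gamma)\,\in\,H$; since the $G$-- and $H$--actions commute, $\rho\,:\,\Gamma\,\longrightarrow\,H$ is a homomorphism, and $(g,h)\,\longmapsto\,\rho_0(g,p_0)\cdot h$ descends to an isomorphism $E_\rho\,\cong\,E_H$. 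Conversely, for (c)$\,\Rightarrow\,$(b): left translation of $G$ on the first factor of $G\times H$ commutes both with the $\Gamma$--action defining $E_\rho$ and with the principal $H$--action, hence descends to a holomorphic left $G$--action on $E_\rho$ that covers the automorphisms $\beta_g$ of $M$ and commutes with the $H$--action, i.e. to a homogeneous structure.

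For (c)$\,\Rightarrow\,$(a): the trivial bundle $G\times H\,\longrightarrow\,G$ carries the trivial flat holomorphic connection, with horizontal leaves $G\times\{h\}$; this connection is preserved by the $\Gamma$--action defining $E_\rho$, so it descends to a flat holomorphic connection on $E_\rho$. For (a)$\,\Rightarrow\,$(c): parallel transport for a flat holomorphic connection on $E_H$ along loops based at $e\Gamma$ defines the monodromy homomorphism $\rho\,:\,\pi_1(M)\,=\,\Gamma\,\longrightarrow\,H$, and the Riemann--Hilbert correspondence for flat holomorphic principal bundles --- parallel transport is an equivalence between holomorphic principal $H$--bundles equipped with a flat holomorphic connection and homomorphisms $\pi_1(M)\,\longrightarrow\,H$ up to conjugation --- identifies $E_H$, as a holomorphic principal $H$--bundle, with $E_\rho$, the latter carrying the connection of the previous sentence. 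This closes the cycle, and in particular gives (a)$\,\Leftrightarrow\,$(b).

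The crucial point, and the only place the hypothesis that $G$ is simply connected enters, is the identification $\pi_1(M)\,=\,\Gamma$ used in the arguments (c)$\,\Leftrightarrow\,$(a). Without it, the monodromy of a flat holomorphic connection is merely a representation of $\pi_1(M)$, which fits in an extension $1\,\to\,\pi_1(G)\,\to\,\pi_1(M)\,\to\,\Gamma\,\to\,1$ and need not factor through $\Gamma$; one then only recovers the invariance of $E_H$ --- via the first part of the theorem, Theorem \ref{thm1} --- rather than its homogeneity. A minor technical matter is that the Riemann--Hilbert correspondence is invoked for principal bundles under the possibly non-algebraic complex Lie group $H$; since the construction is purely differential-geometric (Frobenius integrability together with monodromy), this causes no difficulty.
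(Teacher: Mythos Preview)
Your proposal treats only the second assertion of the theorem --- the equivalence, under the hypothesis that $G$ is simply connected, between admitting a flat holomorphic connection and being homogeneous. The first assertion (holomorphic connection $\Leftrightarrow$ invariant, with no hypothesis on $\pi_1(G)$) is not addressed at all; you even cite ``Theorem \ref{thm1}'' as though it were an external input. Since Theorem \ref{t-i} packages both statements, this is a genuine omission: you owe a proof of the first part, and nothing in your (c)--model argument supplies one (indeed, a non-flat holomorphic connection need not arise from any representation $\rho:\Gamma\to H$, so your auxiliary class (c) does not help here).

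For the second assertion your argument is correct, and it proceeds along a different line than the paper. The paper works intrinsically with the automorphism group $\mathcal{A}$ of $E_H$ and its Lie algebra $H^0(M,\text{At}(E_H))$: a homogeneous structure gives a Lie group homomorphism $\delta:G\to\mathcal{A}$, hence a Lie algebra map $d\delta:\mathfrak{g}\to H^0(M,\text{At}(E_H))$, which, since $TM$ is the trivial bundle with fibre $\mathfrak{g}$, yields a holomorphic splitting of the Atiyah sequence whose curvature vanishes precisely because $d\delta$ preserves brackets; in the other direction the paper compares the two pullbacks $p_2^*E_H$ and $\beta^*E_H$ on $G\times M$ directly as flat bundles. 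You instead introduce the explicit model $E_\rho=(G\times H)/\Gamma$ and show that both ``flat'' and ``homogeneous'' are equivalent to being of this form, essentially by monodromy/Riemann--Hilbert on one side and by reading off the isotropy representation at $e\Gamma$ on the other. Your route is more concrete and yields the useful by-product (c); the paper's route stays closer to the Atiyah formalism and makes the appearance of the curvature condition transparent. Both use the simple connectivity of $G$ at exactly the same point, namely the identification $\pi_1(M)\cong\Gamma$ (equivalently, $p_{2*}=\beta_*$ on $\pi_1$).
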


If $\text{Lie}(G)$ is semisimple and $G$ is simply connected, then we
prove that given any invariant principal $H$--bundle $E_H$, there is
a holomorphic action $\rho\, :\, G\times E_H\, \longrightarrow\, E_H$
such that $(E_H\, ,\rho)$ is homogeneous (see Lemma \ref{lem1}).
This gives the following corollary (see Corollary \ref{cor1}):

\begin{corollary}
Assume that ${\rm Lie}(G)$ is semisimple and $G$ is simply connected.
If a holomorphic principal $H$--bundle $E_H\, \longrightarrow\, M$
admits a holomorphic connection, then it admits a flat
holomorphic connection.
\end{corollary}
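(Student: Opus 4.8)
The plan is to obtain this corollary as a purely formal consequence of Theorem \ref{t-i} and Lemma \ref{lem1}; no new geometric construction is needed. First I would assume that $E_H\,\longrightarrow\, M$ admits a holomorphic connection. By the first assertion of Theorem \ref{t-i}, this is equivalent to $E_H$ being invariant, i.e. $\beta^*_g E_H$ is holomorphically isomorphic to $E_H$ for every $g\,\in\, G$.

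Next I would feed this invariant bundle into Lemma \ref{lem1}. The hypotheses of the corollary --- that $\text{Lie}(G)$ is semisimple and that $G$ is simply connected --- are precisely those of that lemma, so it produces a holomorphic left action $\rho\, :\, G\times E_H\,\longrightarrow\, E_H$ covering the $G$--action on $M$ and commuting with the $H$--action; that is, $(E_H\, ,\rho)$ is homogeneous. Finally I would invoke the second assertion of Theorem \ref{t-i}, whose sole hypothesis ($G$ simply connected) is again assumed: a homogeneous holomorphic principal $H$--bundle on $M$ admits a flat holomorphic connection. Chaining these three implications yields a flat holomorphic connection on $E_H$.

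I do not expect the corollary itself to present any real obstacle: it is simply the composition ``holomorphic connection $\Rightarrow$ invariant (Theorem \ref{t-i}) $\Rightarrow$ homogeneous (Lemma \ref{lem1}) $\Rightarrow$ flat holomorphic connection (Theorem \ref{t-i})'', and the only point to check is that each arrow is applied with its hypotheses in force --- the middle arrow uses both semisimplicity of $\text{Lie}(G)$ and simple connectedness of $G$, the last arrow uses simple connectedness, and all of these are granted. The genuine difficulty is entirely upstream, in Lemma \ref{lem1}, where semisimplicity is presumably used to annihilate an obstruction class lying in a Lie--algebra cohomology group (Whitehead's lemma). The statement is the exact analogue, for $G/\Gamma$ with semisimple $\text{Lie}(G)$, of the fact recalled in the introduction that on a complex torus every holomorphic connection can be upgraded to a flat one.
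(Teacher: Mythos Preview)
Your proposal is correct and matches the paper's approach exactly: the corollary is obtained by chaining Theorem~\ref{thm1} (holomorphic connection $\Rightarrow$ invariant), Lemma~\ref{lem1} (invariant $\Rightarrow$ homogeneous, using semisimplicity and simple connectedness), and Proposition~\ref{prop1} (homogeneous $\Rightarrow$ flat holomorphic connection, using simple connectedness). Your remark that the real work lies in Lemma~\ref{lem1}, where semisimplicity furnishes a Lie-algebra splitting, is also on the mark --- the paper invokes the Levi-type splitting result from Bourbaki at precisely that point.
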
 

The compact complex manifolds $G/\Gamma$ of the above type with
$G$ non-commutative are the key examples of non--K\"ahler
comapct complex manifolds with trivial canonical bundle.
Recently, these manifolds have started to play important role
in string theory of theoretical physics (see \cite{FIUV},
\cite{BBFTY}, \cite{GP}). They have also become a topic of
investigation in complex differential geometry
(see \cite{GGP}, \cite{Gr}).

\section{Homogeneous bundles and holomorphic connection}

\subsection{Holomorphic connection}

Let $G$ be a connected complex Lie group. Let
$$
\Gamma\, \subset\, G
$$
be a cocompact lattice. So
\begin{equation}\label{e1}
M\, :=\, G/\Gamma
\end{equation}
is a compact complex manifold.
Let $\mathfrak g$ be the Lie algebra of $G$. Using the 
right--invariant vector fields, the holomorphic tangent bundle
$TM$ is identified with the trivial vector bundle
$M\times \mathfrak g$ with fiber $\mathfrak g$, so
\begin{equation}\label{t}
TM\, =\, M\times{\mathfrak g} \, \longrightarrow\, M\, .
\end{equation}
Let
\begin{equation}\label{e2}
\beta\, :\, G\times M \, \longrightarrow\, M
\end{equation}
be the left translation action. The map $\beta$ is holomorphic.
For any $g\,\in\, G$, let
\begin{equation}\label{e01}
\beta_g\, :\, M \, \longrightarrow\, M
\end{equation}
be the automorphism defined by $x\, \longmapsto\, \beta(g\, ,x)$.

Let $H$ be a connected complex Lie group. The Lie algebra of $H$
will be denoted by $\mathfrak h$. We recall that a holomorphic
principal $H$--bundle over $M$ is a complex manifold $E_H$, a
surjective holomorphic submersion $f\, :\, E_H\, \longrightarrow
\,M$ and a right holomorphic action of $H$ on $E_H$
$$
\varphi\, :\, E_H\times H\, \longrightarrow\, E_H
$$
(so $\varphi$ is a holomorphic map), such that the
following two conditions hold:
\begin{enumerate}
\item $f\circ \varphi\, =\, f\circ p_1$, where $p_1$ is the
projection of $E_H\times H$ to the first factor, and

\item the action of $H$ on each fiber of $f$ is free and transitive.
\end{enumerate}

Let $f\, :\, E_H\, \longrightarrow\, M$ be a holomorphic principal
$H$--bundle.
Let $TE_H$ be the holomorphic tangent bundle of
$E_H$. The group $H$ acts on the direct image $f_*TE_H$. The
invariant part
$$
\text{At}(E_H) \, :=\, (f_*TE_H)^H\, \subset\, f_*TE_H
$$
defines a holomorphic vector bundle on $M$, which is called the
\textit{Atiyah bundle} for $E_H$. Let
$$
{\mathcal K} \, :=\, \text{kernel}(df)
$$
be the kernel of the differential $df\,:\, TE_H \,\longrightarrow\,
f^*TM$ of $f$. The invariant direct image $(f_*{\mathcal K})^H$
coincides with the sheaf of sections of the adjoint vector
bundle $\text{ad}(E_H)$. We recall that 
$\text{ad}(E_H)\,\longrightarrow\, M$ is the
vector bundle associated to $E_H$ for the adjoint action of $H$
on $\mathfrak h$. Using the inclusion of $(f_*{\mathcal K})^H$
in $(f_*TE_H)^H$, we get a short exact sequence of holomorphic
vector bundles on $M$
\begin{equation}\label{at}
0\, \longrightarrow\, \text{ad}(E_H)\, \longrightarrow\,
\text{At}(E_H)\, \stackrel{df}{\longrightarrow}\, TM\, 
\longrightarrow\, 0\, .
\end{equation}
It is known as the \textit{Atiyah exact sequence} for $E_H$.
(See \cite{At}.)

A \textit{holomorphic connection} on $E_H$ is a holomorphic splitting
of the short exact sequence in \eqref{at}. In other words, a
holomorphic connection on $E_H$ is a holomorphic homomorphism
$$
D\, :\, TM\, \longrightarrow\, \text{At}(E_H)
$$
such that $(df)\circ D\,=\, \text{Id}_{TM}$, where $df$ is the
homomorphism in \eqref{at} (see \cite{At}).

Let $D$ be a holomorphic connection connection on $E_H$. The
\textit{curvature} of $D$ is the obstruction of $D$ to be
Lie algebra structure preserving (the Lie algebra structure of
sheaves of sections of $TM$ and $\text{At}(E_H)$ is given by
the Lie bracket of vector fields). The curvature of $D$
is a holomorphic section of $\text{ad}(E_H)\otimes \bigwedge^2
(TM)^*$. (See \cite{At} for the details.)

A \textit{flat holomorphic connection} is a holomorphic connection
whose curvature vanishes identically.

\subsection{Invariant and homogeneous bundles}

We will now define invariant holomorphic principal bundles and
homogeneous principal bundles.

\begin{definition}\label{def1}
A holomorphic principal $H$--bundle $E_H$ over $M$ will be
called \textit{invariant} if for each $g\, \in\, G$, the pulled
back holomorphic principal $H$--bundle $\beta^*_g E_H$ is
isomorphic to $E_H$, where $\beta_g$ is the map in \eqref{e01}.
\end{definition}

\begin{definition}\label{def2}
A \textit{homogeneous} holomorphic 
principal $H$--bundle on $M$ is defined to be a pair $(E_H\, , 
\rho)$, where
\begin{itemize}
\item $f\, :\, E_H\, \longrightarrow\, M$ is a holomorphic principal 
$H$--bundle, and

\item $\rho\, :\, G\times E_H\, \longrightarrow\, E_H$ is a 
holomorphic left--action on the total space of $E_H$,
\end{itemize}
such that the following two conditions hold:
\begin{enumerate}
\item $(f\circ\rho) (g\, ,z)\, =\, \beta_g(f(z))$
for all $(g\, ,z)\, \in\, G
\times E_H$, where $\beta_g$ is defined in \eqref{e01}, and

\item the actions of $G$ and $H$ on $E_G$ commute.
\end{enumerate}
\end{definition}

If $(E_H\, , \rho)$ is a homogeneous holomorphic
principal $H$--bundle, then $E_H$ is invariant. Indeed, for
any $g\, \in\, G$, the automorphism of $E_H$ defined by 
$z\, \longmapsto\, \rho(g\, ,z)$ produces an isomorphism of
$E_H$ with $\beta^*_g E_H$.

\section{Automorphisms of principal bundles}
We continue with the notation of the previous section.
We will give a criterion for the existence of a (flat) holomorphic
connection on a holomorphic principal $H$--bundle over $M$.

\begin{theorem}\label{thm1}
A holomorphic principal $H$--bundle $E_H$ over $M$ admits a
holomorphic connection if and only if $E_H$ is invariant.
\end{theorem}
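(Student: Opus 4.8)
The plan is to reduce both directions to a single assertion about the exact sequence \eqref{at} and then to extract that assertion from the automorphism group of $E_H$. Because the holomorphic tangent bundle is trivialized by \eqref{t} and $M$ is compact and connected, a global holomorphic section of $TM$ is a constant $\mathfrak g$--valued function, so $H^0(M,TM)\,=\,\mathfrak g$; moreover this same triviality shows that $E_H$ admits a holomorphic connection, i.e.\ that \eqref{at} splits, if and only if the induced map on global sections
\[
(df)_*\, :\, H^0(M,\text{At}(E_H))\, \longrightarrow\, H^0(M,TM)\,=\,\mathfrak g
\]
is surjective: a splitting of \eqref{at} induces a linear section of $(df)_*$, and conversely, lifting a global holomorphic frame of $TM$ to sections of $\text{At}(E_H)$ and extending $\mathcal O_M$--linearly produces a splitting of \eqref{at}. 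So it suffices to prove that $(df)_*$ is surjective if and only if $E_H$ is invariant.

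For ``holomorphic connection $\Rightarrow$ invariant'' one can argue directly. If $D$ is a holomorphic connection and $X\,\in\,\mathfrak g$, let $\widehat X$ be the corresponding (right--invariant) constant section of $TM\,=\,M\times\mathfrak g$; then $D(\widehat X)$ is an $H$--invariant holomorphic vector field on $E_H$ lying over $\widehat X$. It is complete: restricted over any $\widehat X$--orbit in $M$, the bundle $E_H$ pulls back to a holomorphic principal $H$--bundle over $\mathbb C$, which is holomorphically trivial, and on it $D(\widehat X)$ becomes a (time--dependent) left--invariant vector field on $H$, whose flow is defined for all time. Hence the flow of $D(\widehat X)$ is a one--parameter group $\{\Psi_t\}$ of $H$--equivariant biholomorphisms of $E_H$ lying over $\beta_{\exp(tX)}$, so $\Psi_1$ is an isomorphism $E_H\, \xrightarrow{\sim}\, \beta^*_{\exp X}E_H$. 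Since $\{g\,\in\, G\, :\, \beta^*_gE_H\,\cong\, E_H\}$ is a subgroup of $G$ containing $\exp(\mathfrak g)$ and $G$ is connected, this subgroup is all of $G$; so $E_H$ is invariant.

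For the converse I would pass to the group $\widetilde{\text{Aut}}(E_H)$ of $H$--equivariant holomorphic automorphisms of the total space $E_H$. This is a finite--dimensional complex Lie group with Lie algebra $H^0(M,\text{At}(E_H))$, and sending an automorphism to the biholomorphism of $M$ it covers defines a homomorphism $p\, :\, \widetilde{\text{Aut}}(E_H)\, \longrightarrow\, \text{Aut}(M)$ whose differential at the identity is $(df)_*$. Now $\text{Aut}(M)$ is a complex Lie group with $\text{Lie}(\text{Aut}(M))\,=\,H^0(M,TM)\,=\,\mathfrak g$, and the left--translation action $\beta$ identifies $\mathfrak g$ with this Lie algebra, so the connected subgroup $\beta(G)\,\subseteq\,\text{Aut}(M)$ is open, i.e.\ it is the identity component of $\text{Aut}(M)$. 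An isomorphism $\beta^*_gE_H\,\cong\, E_H$ is exactly an element of $p^{-1}(\beta_g)$, so $E_H$ is invariant precisely when $\text{image}(p)\,\supseteq\,\beta(G)$; as $\beta(G)$ is open, this forces $\text{image}(p)$ to have full dimension and hence $dp_{\text{id}}\,=\,(df)_*$ to be surjective. (Conversely, surjectivity of $(df)_*$ makes $p$ submersive, hence open, so $\text{image}(p)$ is an open subgroup and contains $\beta(G)$; thus every $\beta_g$ is covered by an automorphism of $E_H$, which is again invariance.) Together with the first paragraph this proves the theorem.

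The step I expect to be the main obstacle is justifying that $\widetilde{\text{Aut}}(E_H)$ is a finite--dimensional complex Lie group with Lie algebra $H^0(M,\text{At}(E_H))$: one needs that the $H$--invariant holomorphic vector fields on the (generally non--compact) space $E_H$ are complete --- again by restricting to $\widehat X$--orbits in $M$ and trivializing the pullback over $\mathbb C$ --- and then a Bochner--Montgomery--type theorem for the Lie group structure. An essentially equivalent but more concrete route is to work with the family $\beta^*E_H\, \to\, G\times M$: invariance forces $h^0$ of the adjoint bundle of each fibre $\beta^*_gE_H$ to be independent of $g$, so the relative isomorphism bundle $\underline{\text{Isom}}_{G\times M/G}(\text{pr}_M^*E_H,\, \beta^*E_H)$ is a holomorphic principal bundle over $G$ with non--empty fibres; a local holomorphic section near $e\,\in\, G$, normalized to the identity at $e$, is a holomorphic family of isomorphisms $E_H\, \xrightarrow{\sim}\, \beta^*_gE_H$, and its derivative at $e$ in the direction $X$ is an $H$--invariant holomorphic vector field on $E_H$ lying over $\widehat X$. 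Letting $X$ run through $\mathfrak g$ gives surjectivity of $(df)_*$ directly, the key enabling point being exactly the constancy of $h^0$ of the adjoint bundle along the family.
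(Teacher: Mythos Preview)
Your approach is essentially the paper's: both introduce the group of $H$--equivariant biholomorphisms of $E_H$ covering left translations of $M$ (the paper writes it as pairs $(g,\phi)$ and calls it $\mathcal A$, with projection $q\,:\,\mathcal A\to G$), identify its Lie algebra with $H^0(M,\text{At}(E_H))$ and the differential of the projection with $(df)_*$, and then argue that invariance $\Leftrightarrow$ surjectivity of the projection $\Leftrightarrow$ surjectivity of $(df)_*$ $\Leftrightarrow$ existence of a holomorphic connection (the last equivalence via the triviality of $TM$, exactly as in your first paragraph). The only differences are cosmetic: the paper runs the forward direction group--theoretically (a splitting of \eqref{at} makes $dq$ surjective, hence $q$ surjective since $G$ is connected) rather than through your explicit flow argument, and it simply asserts the complex Lie group structure on $\mathcal A$ with $\text{Lie}(\mathcal A)=H^0(M,\text{At}(E_H))$ without the justification you correctly flag as the main technical point; your alternative route via the relative isomorphism bundle over $G$ is not in the paper.
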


\begin{proof}
Let $f\, :\, E_H\, \longrightarrow\, M$ be a holomorphic principal 
$H$--bundle.
Let $\mathcal A$ denote the space of all pairs of the form
$(g\, ,\phi)$, where $g\, \in\, G$, and
$$
\phi\, :\, E_H\, \longrightarrow\, E_H
$$
is a biholomorphism satisfying the following two conditions:
\begin{enumerate}
\item $\phi$ commutes with the action of $H$ on $E_H$, and

\item $f\circ \phi\,=\, \beta_g\circ f$, where $\beta_g$ is
defined in \eqref{e01}.
\end{enumerate}
So $\phi$ gives a holomorphic isomorphism of the principal $H$--bundle 
$\beta^*_g E_H$ with $E_H$.

We note that $\mathcal A$ is a group using the
composition rule
$$
(g_1\, ,\phi_1)\cdot (g_2\, ,\phi_2)\,=\, (g_1g_2\, ,\phi_1\circ
\phi_2)\, .
$$
In fact, $\mathcal A$ is a complex Lie group, and the Lie algebra
$$
{\mathfrak a} \, :=\, \text{Lie}(\mathcal A)
$$
is identified with
$H^0(M,\, \text{At}(E_H))$. Note that from \eqref{t} it follows that
all holomorphic vector fields on $M$ are given by the Lie algebra of
$G$ using the action $\beta$ in \eqref{e2}. The compactness of $M$
ensures that $\mathcal A$ is of finite dimension. As noted
before, using the Lie bracket of vector fields, the
sheaf of holomorphic sections of $\text{At}(E_H)$ 
has the structure of a Lie algebra. Hence the space $H^0(M,\, 
\text{At}(E_H))$ of global holomorphic sections is a Lie algebra.

Let
\begin{equation}\label{q}
q\, :\, {\mathcal A}\,\longrightarrow\, G
\end{equation}
be the projection defined by $(g\, ,\phi)\, \longmapsto\, g$.
The homomorphism of Lie algebras
\begin{equation}\label{q1}
dq\, :\, {\mathfrak a}\,=\, H^0(M,\,\text{At}(E_H))
\,\longrightarrow\,{\mathfrak g}\, :=\, \text{Lie}(G)
\,=\, H^0(M,\, TM)
\end{equation}
associated to $q$ in \eqref{q} coincides with the one given by
the homomorphism $df$ in \eqref{at}.

First assume that $E_H$ admits a holomorphic connection. Recall
that a holomorphic connection on $E_H$ is a holomorphic splitting
of the exact sequence in \eqref{at}.
Using a holomorphic connection, the vector bundle
$\text{At}(E_H)$ gets identified with the
direct sum $\text{ad}(E_H)\oplus TM$. In particular, the
homomorphism
$$
H^0(M,\,\text{At}(E_H))\,\longrightarrow\, H^0(M,\, TM)
$$
induced by $df$ in \eqref{at} is surjective. Hence the homomorphism
$dq$ in \eqref{q1} is surjective. Since $G$ is connected, this implies
that the homomorphism $q$ in \eqref{q} is surjective. This
immediately implies that $E_H$ is invariant (recall that for any
$(g\, ,\phi)\,\in\, \mathcal A$, the map
$\phi$ is a holomorphic isomorphism of the principal $H$--bundle
$\beta^*_g E_H$ with $E_H$).

To prove the converse, assume that $E_H$ is invariant. Therefore,
the homomorphism $q$ in \eqref{q} is surjective. Hence the
homomorphism $dq$ in \eqref{q1} is surjective. Let $d$ be the dimension
of $G$. Since $dq$ is surjective, and $TM$ is the trivial vector
bundle of rank $d$, there are $d$ sections
$$
\sigma_1\, , \cdots\, ,\sigma_d\, \in\, H^0(M,\,\text{At}(E_H))
$$
such that $H^0(M,\, TM)\,=\, \mathfrak g$ is generated by
$\{dq(\sigma_1)\, ,\cdots\, ,dq(\sigma_d)\}$.

We have a holomorphic homomorphism
$$
D\, :\, TM\, \longrightarrow\, \text{At}(E_H)
$$
defined by $\sum_{i=1}^d c_i\cdot dq(\sigma_i)(x)\, \longmapsto\,
\sum_{i=1}^d c_i\cdot \sigma_i(x)$, where $x\, \in\, M$, and
$c_i\, \in\, \mathbb C$. It is straight--forward to check that
$(df)\circ D\,=\, \text{Id}_{TM}$, where $df$ is the
homomorphism in \eqref{at}. Hence $D$ defines a holomorphic
connection on $E_H$.
\end{proof}

\begin{proposition}\label{prop1}
Assume that the group $G$ is simply connected. A holomorphic
principal $H$--bundle $E_H$ over $M$ admits a flat holomorphic
connection if and only if $E_H$ is homogeneous.
\end{proposition}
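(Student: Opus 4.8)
The plan is to mimic the proof of Theorem \ref{thm1}, upgrading the group $\mathcal{A}$ and the section argument so that the connection produced is flat, and conversely extracting a flat connection from a homogeneous structure. Recall that a holomorphic connection $D\colon TM\to \text{At}(E_H)$ is flat precisely when the induced map on sheaves of sections is a Lie algebra homomorphism. Using the identification $TM = M\times\mathfrak g$ from \eqref{t}, the Lie algebra $H^0(M,TM)$ is exactly $\mathfrak g$, with its Lie bracket; so a flat connection amounts to a Lie algebra splitting $\mathfrak g\to H^0(M,\text{At}(E_H))=\mathfrak a$ of the surjection $dq$ from \eqref{q1}.

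First I would prove the ``only if'' direction. Suppose $E_H$ carries a flat holomorphic connection $D$. Then $D$ gives a Lie algebra section $s\colon\mathfrak g\to\mathfrak a$ of $dq$, i.e. $\mathfrak a = \mathfrak{ad} \rtimes s(\mathfrak g)$ compatibly with brackets. Since $G$ is simply connected, integrate $s$ to a Lie group homomorphism $\widetilde s\colon G\to\mathcal A$ splitting $q$ (this is where simple connectedness is used). Composing $\widetilde s$ with the action of $\mathcal A$ on $E_H$ — each element $(g,\phi)\in\mathcal A$ acts on $E_H$ via $\phi$ — yields a holomorphic left action $\rho\colon G\times E_H\to E_H$. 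The defining property $f\circ\phi = \beta_g\circ f$ of elements of $\mathcal A$ gives condition (1) of Definition \ref{def2}, and condition (1) of the definition of $\mathcal A$ (that each $\phi$ commutes with the $H$-action) gives condition (2). Hence $(E_H,\rho)$ is homogeneous.

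For the converse, suppose $(E_H,\rho)$ is homogeneous. For each $g\in G$ the map $z\mapsto\rho(g,z)$ is a biholomorphism of $E_H$ commuting with $H$ and covering $\beta_g$, so it defines an element of $\mathcal A$; this gives a holomorphic homomorphism $\widetilde s\colon G\to\mathcal A$ with $q\circ\widetilde s=\text{Id}_G$. Differentiating, $d\widetilde s\colon\mathfrak g\to\mathfrak a$ is a Lie algebra homomorphism splitting $dq$. Running the construction in the proof of Theorem \ref{thm1} with $\sigma_i := d\widetilde s(e_i)$ for a basis $e_1,\dots,e_d$ of $\mathfrak g$, one gets a holomorphic connection $D$ whose image sub-sheaf is closed under the Lie bracket (because $d\widetilde s$ is a Lie algebra map), so the curvature of $D$ vanishes and $D$ is flat.

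I expect the main obstacle to be making the integration step in the ``only if'' direction fully rigorous: one must check that $\mathcal A$ is indeed a complex Lie group with Lie algebra $\mathfrak a = H^0(M,\text{At}(E_H))$ (asserted in the proof of Theorem \ref{thm1}), that the Lie algebra section $s$ really integrates to a group homomorphism on the simply connected $G$ — invoking the standard Lie theory correspondence — and that the resulting $\rho$ is holomorphic as a map $G\times E_H\to E_H$, which follows from holomorphy of the $\mathcal A$-action on $E_H$ together with holomorphy of $\widetilde s$. A minor subtlety worth noting is verifying that the Lie bracket identifications of $H^0(M,TM)$ with $\mathfrak g$ and of $\mathfrak a$ with $H^0(M,\text{At}(E_H))$ are compatible with $dq$, so that ``$D$ flat'' $\iff$ ``image of $D$ is a Lie subalgebra'' $\iff$ ``the section of $dq$ is a Lie algebra homomorphism''; this is essentially the content of Atiyah's description of curvature recalled in Section \ref{sec1}.
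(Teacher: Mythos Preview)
Your proposal is correct. The converse direction (homogeneous $\Rightarrow$ flat connection) is essentially identical to the paper's: both build the homomorphism $G\to\mathcal A$ from $\rho$, differentiate to a Lie algebra splitting of $dq$, and observe that the resulting connection is flat because the splitting preserves brackets. The ``only if'' direction, however, is handled differently. The paper argues via the Riemann--Hilbert correspondence: a flat $E_H$ is determined by a homomorphism $\pi_1(M)\to H$, and since $G$ is simply connected the two maps $p_2,\beta\colon G\times M\to M$ induce the same homomorphism on $\pi_1$, yielding a canonical isomorphism $p_2^*E_H\cong\beta^*E_H$ of flat bundles that restricts to the identity over $\{e\}\times M$; reading this isomorphism fibrewise gives the action $\rho$. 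You instead stay entirely inside the Lie--theoretic framework of Theorem~\ref{thm1}: the flat connection, restricted to constant sections of $TM=M\times\mathfrak g$, already \emph{is} a Lie algebra section of $dq$, and simple connectedness of $G$ lets you integrate it to a group homomorphism $G\to\mathcal A$. Your route is arguably more uniform with the rest of the paper (it is exactly the mechanism used later in Lemma~\ref{lem1}, where Levi's theorem supplies the Lie algebra section), and it makes transparent that the homogeneous structure produced is the one whose associated flat connection is the original $D$. The paper's monodromy argument, on the other hand, is more geometric and avoids invoking the Lie group structure on $\mathcal A$ or the integration of Lie algebra homomorphisms.
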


\begin{proof}
Let $f\, :\, E_H\, \longrightarrow\, M$ be a holomorphic principal 
$H$--bundle equipped with a flat holomorphic connection $D$. 
Therefore, $E_H$ is given by a homomorphism from the
fundamental group $\pi_1(M,\, (e\, ,e\Gamma))$ to $H$. We will
construct an action of $G$ on $E_H$.

Let $p_2\, :\, G\times M \, \longrightarrow\, M$ be the projection
to the second factor. Since $G$ is simply connected, the homomorphisms
$$
p_{2*}\, , \beta_* \, :\, \pi_1(G\times M,\, (e\, ,e\Gamma)) 
\,\longrightarrow\, \pi_1(M,\,(e\, ,e\Gamma))
$$
induced by $p_2$ and $\beta$ (see \eqref{e2}) coincide. Therefore,
there is a canonical isomorphism of flat principal $H$--bundles
$$
\mu\, :\, p^*_2 E_H\, \longrightarrow\, \beta^* E_H
$$
which is the identity map over $\{e\}\times M$.

This map $\mu$ defines an action
$$
\rho\, :\, G\times E_H\, \longrightarrow\, E_H\, ;
$$
for any $(g\, ,x)\, \in\, G\times M$, the isomorphism
$$
\rho_{g,x}\, :\, (E_H)_x \, \longrightarrow\, (E_H)_{\beta_g(x)}
$$
is the restriction of $\mu$ to $(g\, ,x)$, where $\beta_g$
is the map in \eqref{e01}.
This action $\rho$ makes $E_H$ a homogeneous bundle.

To prove the converse, take a homogeneous holomorphic principal
$H$--bundle $(E_H\, ,\rho)$. For any $g\, \in\, G$, let
$$
\rho_{g}\, :\, E_H \, \longrightarrow\, E_H
$$
be the map defined by $z\, \longmapsto\, \rho(g\, ,z)$.
Consider the group $\mathcal A$ constructed in the proof
of Theorem \ref{thm1}. Let
\begin{equation}\label{delta}
\delta\, :\, G\, \longrightarrow\, \mathcal A
\end{equation}
be the homomorphism defined by $g\, \longmapsto\, (g\, ,\rho_g)$.
It is easy to see that
\begin{equation}\label{cc}
q\circ \delta\, =\, \text{Id}_G\, ,
\end{equation}
where $q$ is the homomorphism in \eqref{q}. 

Let
\begin{equation}\label{delta2}
d\delta\, :\, {\mathfrak g}\, \longrightarrow\, \text{Lie}
({\mathcal A})\,=\, H^0(M,\, \text{At}(E_H))
\end{equation}
be the homomorphism of Lie algebras associated to the homomorphism
$\delta$ in \eqref{delta}. From \eqref{cc} it follows that
\begin{equation}\label{cc2}
(dq)\circ d\delta\,=\, \text{Id}_{\mathfrak g}\, ,
\end{equation}
where $dq$ is the homomorphism in \eqref{q1} (it is the
homomorphism of Lie algebras corresponding to $q$).

Since $TM$ is the trivial vector bundle with fiber $\mathfrak g$,
the homomorphism $d\delta$ in \eqref{delta2} produces a
homomorphism of vector bundles
$$
\widetilde{d\delta}\, :\, TM\, \longrightarrow\, \text{At}(E_H)\, ;
$$
$\widetilde{d\delta}(x\, ,v)\,=\, (d\delta)(v)(x)$,
where $v\, \in\, \mathfrak g$, and $x\, \in\, M$.
Combining \eqref{cc2} and the fact that the
homomorphism $dq$ coincides with the one given by the homomorphism
$df$ in \eqref{at}, we conclude that $(df)\circ\widetilde{d\delta}
\,=\, \text{Id}_{TM}$. Therefore, $\widetilde{d\delta}$ defines a
holomorphic connection on $E_H$. The curvature of
this holomorphic connection vanishes identically because the
linear map $d\delta$ is Lie algebra structure preserving.
\end{proof}

\begin{remark}\label{rem1}
{\rm In Proposition \ref{prop1}, it is essential to assume that
$G$ is simply connected. To give examples, take $G$ to be an
elliptic curve and $\Gamma$ to be the trivial group $e$. Take
$H\, =\, {\mathbb C}^*$, and take $E_H\,=\, L$ to be a nontrivial
holomorphic line 
bundle of degree zero. Then $L$ admits a flat holomorphic connection
because $L$ is topologically trivial, while $L$ does not admit 
any homogeneous structure because $L$ is not trivial.}
\end{remark}

\section{The semisimple case}

In this section we assume that $\mathfrak g$ is semisimple, and
$G$ is simply connected.

Let $f\, :\, E_H\, \longrightarrow\, M\, :=\, G/\Gamma$ be
an invariant holomorphic principal $H$--bundle.

\begin{lemma}\label{lem1}
There is a holomorphic left-action of $G$
$$
\rho\, :\, G\times E_H\, \longrightarrow\, E_H
$$
such that $(E_H\, ,\rho)$ is homogeneous.
\end{lemma}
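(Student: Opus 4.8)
The plan is to produce a holomorphic homomorphism $\delta\, :\, G\, \longrightarrow\, \mathcal A$ splitting the projection $q$ of \eqref{q} (here $\mathcal A$ is the complex Lie group constructed in the proof of Theorem \ref{thm1}), and then to define $\rho$ from $\delta$ exactly as in the second half of the proof of Proposition \ref{prop1}. Since $E_H$ is invariant, the proof of Theorem \ref{thm1} shows that $q$ is surjective, so the homomorphism of Lie algebras $dq\, :\, {\mathfrak a}\,=\,H^0(M,\,\text{At}(E_H))\,\longrightarrow\,{\mathfrak g}$ in \eqref{q1} is surjective as well. Writing ${\mathfrak n}\,:=\,\text{kernel}(dq)$ (this is the Lie algebra of vertical automorphisms of $E_H$, namely $H^0(M,\, \text{ad}(E_H))$), we get a short exact sequence of finite--dimensional complex Lie algebras
$$
0\,\longrightarrow\,{\mathfrak n}\,\longrightarrow\,{\mathfrak a}\,
\stackrel{dq}{\longrightarrow}\,{\mathfrak g}\,\longrightarrow\,0\, .
$$

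The first, and only nontrivial, step would be to split this sequence; this is where the hypothesis that $\mathfrak g$ is semisimple enters. Let ${\mathfrak s}\,\subset\,{\mathfrak a}$ be a Levi subalgebra (a semisimple subalgebra with ${\mathfrak a}\,=\,\text{rad}({\mathfrak a})\,+\,{\mathfrak s}$), which exists by the Levi decomposition. The ideal $dq(\text{rad}({\mathfrak a}))$ of $\mathfrak g$ is solvable, hence zero because $\mathfrak g$ is semisimple; consequently $dq({\mathfrak s})\,=\,dq({\mathfrak a})\,=\,{\mathfrak g}$. The kernel of $dq|_{\mathfrak s}$ is an ideal of the semisimple Lie algebra $\mathfrak s$, so it admits a complementary ideal ${\mathfrak s}'\,\subset\,{\mathfrak s}$, and $dq$ restricts to an isomorphism ${\mathfrak s}'\,\stackrel{\sim}{\longrightarrow}\,{\mathfrak g}$. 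Its inverse
$$
s\, :\, {\mathfrak g}\,\longrightarrow\,{\mathfrak a}
$$
is a homomorphism of complex Lie algebras satisfying $dq\circ s\,=\,\text{Id}_{\mathfrak g}$.

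Next, since $G$ is simply connected, the homomorphism of complex Lie algebras $s$ integrates to a holomorphic homomorphism of complex Lie groups $\delta\, :\, G\,\longrightarrow\,\mathcal A$ with $d\delta\,=\,s$. Then $q\circ\delta\, :\, G\,\longrightarrow\, G$ is a holomorphic homomorphism whose differential at the identity is $dq\circ s\,=\,\text{Id}_{\mathfrak g}$; as $G$ is connected, it follows that $q\circ\delta\,=\,\text{Id}_G$. Thus $\delta$ is a section of $q$, so we may write $\delta(g)\,=\,(g\, ,\rho_g)$ for a biholomorphism $\rho_g\, :\, E_H\,\longrightarrow\, E_H$, and we define $\rho(g\, ,z)\,=\,\rho_g(z)$. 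This $\rho$ is a left--action of $G$ on $E_H$ because $\delta$ is a homomorphism, and it is holomorphic because $\delta$ is holomorphic and the tautological evaluation map ${\mathcal A}\times E_H\,\longrightarrow\, E_H$, $((g\, ,\phi)\, ,z)\,\longmapsto\,\phi(z)$, is holomorphic (this is part of the complex Lie group structure on $\mathcal A$). Finally, the two conditions in Definition \ref{def2} hold by the two defining conditions of $\mathcal A$: condition (2) of $\mathcal A$ gives $f\circ\rho_g\,=\,\beta_g\circ f$, and condition (1) of $\mathcal A$ gives that each $\rho_g$ commutes with the action of $H$. Hence $(E_H\, ,\rho)$ is a homogeneous holomorphic principal $H$--bundle.

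To summarize, the whole argument reduces, via the group $\mathcal A$ and its projection $q$ from Theorem \ref{thm1}, to splitting an extension of $\mathfrak g$ by a finite--dimensional Lie algebra; the expected main obstacle — and the only place semisimplicity of $\mathfrak g$ is used — is producing the Lie algebra section $s$, after which integration on the simply connected group $G$ and the formal properties of $\mathcal A$ finish the proof.
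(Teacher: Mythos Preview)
Your proof is correct and follows essentially the same route as the paper: both produce a Lie algebra splitting of $dq\,:\,{\mathfrak a}\to{\mathfrak g}$ using semisimplicity of $\mathfrak g$, integrate it to a group homomorphism $G\to\mathcal A$ via simple connectedness, and read off $\rho$. The only difference is that the paper invokes Bourbaki's result that a surjection onto a semisimple Lie algebra admits a Lie algebra section, whereas you unpack this via the Levi decomposition of $\mathfrak a$; both arguments are equivalent.
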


\begin{proof}
Consider the homomorphism
$$
dq\, :\, {\mathfrak a}\, :=\,
\text{Lie}({\mathcal A})\,\longrightarrow\,{\mathfrak g}
$$
in \eqref{q1}. It is surjective because $E_H$ is invariant implying
that $q$ is surjective. Since ${\mathfrak g}$ is semisimple, there
is a Lie algebra homomorphism
$$
\theta\, :\, {\mathfrak g}\,\longrightarrow\,{\mathfrak a}
$$
such that $(dq)\circ\theta \,=\, \text{Id}_{\mathfrak g}$
(see \cite[p. 91, Corollaire 3]{Bo}). Fix such a homomorphism
$\theta$.

Since $G$ is simply connected, there is a unique
homomorphism of Lie groups
$$
\Theta\, :\, G\,\longrightarrow\, {\mathcal A}
$$
such that $\theta$ is the Lie algebra
homomorphism corresponding to $\Theta$. Now we have an action
$$
\rho\, :\, G\times E_H\, \longrightarrow\, E_H
$$
defined by $\Theta(g) \,=\, (g\, ,\{z\longmapsto \rho(g\, ,z)\})$.
The pair $(E_H\, ,\rho)$ is a homogeneous holomorphic principal 
$H$--bundle.
\end{proof}

Combining Lemma \ref{lem1} with Theorem \ref{thm1} and
Proposition \ref{prop1}, we get the following:

\begin{corollary}\label{cor1}
If a holomorphic principal $H$--bundle $E_H\, \longrightarrow\,
G/\Gamma$ admits a holomorphic connection, then it admits a flat
holomorphic connection.
\end{corollary}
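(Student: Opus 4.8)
The plan is simply to chain together the three results already proved in this and the previous section. Assume $E_H \longrightarrow M = G/\Gamma$ admits a holomorphic connection. The first step is to invoke Theorem \ref{thm1}: a holomorphic principal $H$--bundle admits a holomorphic connection if and only if it is invariant, so $E_H$ is invariant. Concretely this says that the projection $q \colon \mathcal{A} \longrightarrow G$ of \eqref{q} is surjective, which is exactly the input needed for the next step.

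The second step uses the standing hypotheses of this section, namely that $\mathfrak g = \text{Lie}(G)$ is semisimple and $G$ is simply connected. Applying Lemma \ref{lem1} to the invariant bundle $E_H$ produces a holomorphic left--action $\rho \colon G \times E_H \longrightarrow E_H$ for which $(E_H\, ,\rho)$ is a homogeneous holomorphic principal $H$--bundle. The real content here is internal to Lemma \ref{lem1}: one uses semisimplicity of $\mathfrak g$ to split the surjection $dq \colon \mathfrak a \longrightarrow \mathfrak g$ of Lie algebras by a homomorphism $\theta$ (Levi's theorem, as cited from Bourbaki), and then simple connectedness of $G$ to integrate $\theta$ to a Lie group homomorphism $\Theta \colon G \longrightarrow \mathcal{A}$, whose second component is the desired action.

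The third step feeds the homogeneous bundle $(E_H\, ,\rho)$ into Proposition \ref{prop1}. Its converse direction, which again requires $G$ simply connected, asserts that a homogeneous bundle admits a flat holomorphic connection; explicitly, one takes the connection $\widetilde{d\delta}$ associated with the differential of $\delta \colon g \longmapsto (g\, ,\rho_g)$, which is flat because $d\delta$ preserves the Lie bracket. Stringing these three implications together --- holomorphic connection $\Rightarrow$ invariant $\Rightarrow$ homogeneous $\Rightarrow$ flat holomorphic connection --- gives the corollary.

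I do not expect any genuine obstacle at the level of the corollary itself: all the work has already been done in Theorem \ref{thm1}, Proposition \ref{prop1}, and Lemma \ref{lem1}. The only points deserving a moment's care are that the hypotheses of Theorem \ref{thm1} and Proposition \ref{prop1} are in force (the former needs only the general setup, the latter needs $G$ simply connected, which holds here), and that the homogeneous structure $\rho$ produced by Lemma \ref{lem1} is exactly of the form required as input by Proposition \ref{prop1}, which it is by Definition \ref{def2}.
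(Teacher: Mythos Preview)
Your proposal is correct and follows exactly the paper's approach: the paper simply states that the corollary follows by combining Lemma \ref{lem1} with Theorem \ref{thm1} and Proposition \ref{prop1}, and your chain of implications (holomorphic connection $\Rightarrow$ invariant $\Rightarrow$ homogeneous $\Rightarrow$ flat holomorphic connection) is precisely that combination. One small inaccuracy in your commentary: the direction of Proposition \ref{prop1} you actually use (homogeneous $\Rightarrow$ flat) does not require $G$ to be simply connected --- its proof only uses the Lie algebra homomorphism $d\delta$ --- though the hypothesis is in force anyway.
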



\end{document}